\newtheorem{theorem}{Theorem}[section]
\newtheorem{lemma}[theorem]{Lemma}
\newtheorem{corollary}[theorem]{Corollary}
\newtheorem{proposition}[theorem]{Proposition}
\theoremstyle{definition}
\newtheorem{definition}[theorem]{Definition}
\theoremstyle{remark}
\newtheorem{remark}[theorem]{Remark}
\numberwithin{equation}{section}
\title[Stability of quadratic functional equation ]{Stability of quadratic functional equation in modular spaces}
 \author[A. Baza]{Abderrahman Baza$^{1}$}
\address{$^{1}$Laboratory of Analysis, Geometry and Application, Departement of Mathematics,  University of Ibn Tofail, Kenitra, Morocco}
\email{abderrahmane.baza@gmail.com}
\author[M. Rossafi]{Mohamed Rossafi$^{2}$}
\address{$^{2}$Laboratory Analysis, Geometry and Applications, Higher School of Education and Training, University of Ibn Tofail, Kenitra, Morocco}
\email{rossafimohamed@gmail.com}
\author[A. J. Gnanaprakasam]{Arul Joseph Gnanaprakasam$^{3*}$}
\address{$^{3}$Department of Mathematics, College of Engineering and Technology, Faculty of Engineering and Technology SRM Institute of Science and Technology SRM Nagar, Kattankulathur, Kanchipuram Chennai Tamil Nadu603203, India}
\email{aruljoseph.alex@gmail.com}
\thanks{$^{*}$Corresponding author}
\subjclass[2010]{Primary 39B82, Secondary 39B52}
\keywords{Hyers-Ulam stability, radical cubic functional inequalities, modular sapaces, fuzzy Banach, $\Delta_2$-condition.}
\begin{document}
	\begin{abstract}
		In this paper, we study the Hyers-Ulam stability of the following equation
\begin{multline*}
			\phi(x+y-z)+\phi(x+z-y)+\phi(y+z-x)=\phi (x-y)+\phi(x-z)+\phi(z-y)\\
			+\phi(x)+\phi(y) +\phi(z)
\end{multline*}
		in modular space, with or without $\Delta_2$-condition, and in $\beta$-homogeneous Banach space.
\end{abstract}
	
		\maketitle
\section{Introduction and preliminaries}
	In 1950, Nakano \cite{Nakano1950} established the theory of modulars and modular spaces. 
	After that, many autors have been perfectly developped this theory, for example, Orlicz \cite{Orlicz}, Koshi\cite{koshi}, Amemiya \cite{Amemiya}, Masur\cite{Mazur}, Luxemburg \cite{Luxemburg}, Musielak \cite{Musielak}. 
	The study of various Orlicz spaces \cite{Orlicz} and interpolation theory \cite{Gavruta,Ulam} have been  extensively applied the theory of modulars and modular sapaces.
	Firstly, we introduce the principal definitions, notations and porperties of the theory of modular and modular spaces.
	\begin{definition}\label{Definition1.1}
	Let $Y$ be an arbitrary vector space. A functional $\rho: Y \rightarrow[0, \infty)$ is called a modular if for arbitrary $u, v \in Y$;
	\begin{enumerate}
		\item $\rho(u)=0$ if and only if $u=0$.\label{item1}
		\item $\rho(\alpha u)=\rho(u)$ for every scalar $\alpha$ with $|\alpha|=1$.
		\item $\rho(\alpha u+\beta v) \leq \rho(u)+\rho(v)$ if and only if $\alpha+\beta=1$ and $\alpha ,\beta \geq 0$.\\
		If \eqref{item1} is replaced by:
		\item $\rho(\alpha u+\beta v) \leq \alpha \rho(u)+\beta \rho(v)$ if and only if $\alpha+\beta=1$ and $\alpha, \beta \geq 0$, then we say that $\rho$ is a convex modular.
	\end{enumerate}
	A modular $\rho$ defines a corresponding modular space, i.e., the vector space $Y_\rho$ given by:
	\begin{equation*}
		Y_\rho=\{u \in Y: \rho(\lambda u) \rightarrow 0 \text { as } \lambda \rightarrow 0\}.
	\end{equation*}
	A function modular is said to be satisfy the $\Delta_2$-condition $\rho$  if there exist $\tau > 0$ such that $\rho(2 u) \leq \tau \rho(u)$ for all $u \in Y_\rho$.
\end{definition}
\begin{definition}
	Let $\left\{u_n\right\}$ and $u$ be in $Y_\rho$. Then:
	\begin{enumerate}
		\item 
		The sequence $\left\{u_n\right\}$, with $u_n \in Y_\rho$, is $\rho$-convergent to $u$ and write: $u_n \to  u$ if $\rho\left(u_n-u\right)\to 0$ as $n \rightarrow \infty$.
		\item
		The sequence $\{u_n\}$, with $u_n \in Y_\rho$, is called $\rho$-Cauchy if\\ $\rho\left(u_n-u_m\right)\to 0$ as $n: m \to \infty$.
		\item
		$Y_\rho$ is called $\rho$-complete if every $\rho$-Cauchy sequence in $Y_\rho$ is $\rho$-convergent.
	\end{enumerate}
\end{definition}
\begin{proposition}
	In modular space,
	\begin{itemize}
		\item If $u_n \overset{\rho}{\to} u$ and a is a constant vector, then $u_n+a \overset{\rho}{\to} u+a$.
		\item If $u_n \overset{\rho}{\to} u$ and $v_n \overset{\rho}{\to} v$ then $\alpha u_n + \beta v_n \overset{\rho}{\to} \alpha u+ \beta v$, where $\alpha+\beta \leq 1$ and $\alpha,\beta \geq 1$. 
	\end{itemize}
\end{proposition}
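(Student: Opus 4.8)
The plan is to verify each assertion directly from the definition of $\rho$-convergence, namely that $u_n \overset{\rho}{\to} u$ means $\rho(u_n - u) \to 0$. The first bullet is essentially immediate: since $(u_n + a) - (u + a) = u_n - u$, we have $\rho\bigl((u_n+a)-(u+a)\bigr) = \rho(u_n - u) \to 0$, so the constant vector $a$ plays no role and the conclusion follows at once.

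For the second bullet, where I read the hypothesis as $\alpha + \beta \le 1$ and $\alpha, \beta \ge 0$ (the printed $\alpha,\beta \ge 1$ being evidently a typo, as it would contradict $\alpha+\beta\le 1$), I would first reduce matters to the differences. Writing $w_n = u_n - u$ and $z_n = v_n - v$, one has $\alpha u_n + \beta v_n - (\alpha u + \beta v) = \alpha w_n + \beta z_n$, with $\rho(w_n) \to 0$ and $\rho(z_n) \to 0$ by hypothesis, so the goal becomes showing $\rho(\alpha w_n + \beta z_n) \to 0$.

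The key step, and the only subtle one, is that property (3) of Definition \ref{Definition1.1} furnishes subadditivity only for coefficients summing to exactly $1$, whereas here $\alpha + \beta$ may be strictly less than $1$. To bridge this gap I would pad the combination with the zero vector: assuming $\alpha + \beta > 0$ (the case $\alpha = \beta = 0$ being trivial, since then $\alpha w_n + \beta z_n = 0$), set $c = 1 - \alpha - \beta \in [0,1)$ and write
\[
\alpha w_n + \beta z_n = (1-c)\Bigl(\tfrac{\alpha}{\alpha+\beta}\, w_n + \tfrac{\beta}{\alpha+\beta}\, z_n\Bigr) + c\cdot 0 .
\]
Applying property (3) to this genuine convex combination (outer weights $1-c$ and $c$), then once more to the inner combination (inner weights $\tfrac{\alpha}{\alpha+\beta}$ and $\tfrac{\beta}{\alpha+\beta}$, which sum to $1$), together with $\rho(0)=0$ from property (1), yields
\[
\rho(\alpha w_n + \beta z_n) \le \rho(w_n) + \rho(z_n).
\]

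Finally, since $0 \le \rho(\alpha w_n + \beta z_n) \le \rho(w_n) + \rho(z_n) \to 0$, a sandwich argument gives $\rho(\alpha w_n + \beta z_n) \to 0$, which is the desired conclusion. I expect the main obstacle to be precisely this bookkeeping of forcing the weights to sum to $1$ so that property (3) becomes applicable; once the zero-padding device is in place, the remainder is routine.
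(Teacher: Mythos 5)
Your proof is correct. For the record, the paper states this proposition in its preliminaries without offering any proof at all, so there is no argument of the paper's to compare yours against; what follows is only a check of your reasoning. The first bullet is, as you say, immediate from $(u_n+a)-(u+a)=u_n-u$. For the second, you rightly read the hypothesis as $\alpha,\beta\ge 0$ and $\alpha+\beta\le 1$ (the printed $\alpha,\beta\ge 1$ is incompatible with $\alpha+\beta\le 1$), and the zero-padding device is exactly the right way around the fact that property (3) of Definition \ref{Definition1.1} only covers weights summing to exactly one: writing $\alpha w_n+\beta z_n=(1-c)s_n+c\cdot 0$ with $s_n=\tfrac{\alpha}{\alpha+\beta}w_n+\tfrac{\beta}{\alpha+\beta}z_n$ and $c=1-\alpha-\beta$, two applications of property (3) together with $\rho(0)=0$ give $\rho(\alpha w_n+\beta z_n)\le\rho(s_n)\le\rho(w_n)+\rho(z_n)\to 0$, and you correctly dispose of the degenerate case $\alpha=\beta=0$ separately. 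An equivalent packaging, closer in spirit to the remark that follows the proposition in the paper, is to note first that property (3) with second vector $0$ already yields the monotonicity $\rho(\lambda u)\le\rho(u)$ for $0\le\lambda\le 1$, and then apply property (3) once to the genuine convex combination $s_n$, since $\alpha w_n+\beta z_n=(\alpha+\beta)s_n$; this is the same idea as yours in slightly different clothing, so nothing essential changes.
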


\begin{remark}
	Note that $\rho(u)$ is an increasing function, for all $u \in X$. 
	Suppose $0<a<b$, then property $(4)$ of Definition \ref{Definition1.1} with $v=0$ shows that $\rho(a u)=\rho\left(\dfrac{a}{b} b u\right) \leq \rho(b u)$ for all $u \in Y$. 
	Morever, if $\rho$ is a convexe modular on $Y$ and $|\alpha| \leq 1$, then $\rho(\alpha u) \leq \alpha \rho(u)$.
	
	In general, if $\lambda_i \geq 0$, $i=1, \dots,n$ and  $\lambda_1,\lambda_2,\dots,\lambda_n \leq1$ then $\rho (\lambda_1 u_1+\lambda_2 u_2+\dots+\lambda_n u_n) \leq \lambda_1 \rho(u_1)+\lambda_2 \rho(u_2)+\dots+\lambda_n \rho(u_n)$.
	
	If $\{u_n\}$ is $\rho$-convergent to $u$, then $\{ c u_n \}$ is $\rho$-convergent to $cu$, where $|c| \leq 1$.
	But the $\rho$-convergent of a sequence $\{u_n\}$ to $u$ does not imply that $\{\alpha u_n\}$ is $\rho$-convergent to $\alpha u_n$ for scalars $\alpha $ with $|\alpha|>1$.
	
	If $\rho$ is a convex modular satisfying $\Delta_2$ condition with $0 <\tau<2$, then $\rho(u) \leq \tau \rho(\dfrac{1}{2} u) \leq \dfrac{\tau}{2} \rho(u)$ for all $u$.\\
	Hence $\rho=0$. Consequently, we must have $\tau \geq 2$ if $\rho$ is convex modular. 
\end{remark}

The problem of stability has been released by Hyers in 1941 \cite{Hyers}, when he gived an answer to the question: There exists a homomorphrism  $F: G_1 \rightarrow G_2$ such that $d(f(x), F(x))<\varepsilon$ for all $x \in G_1$, with $G_1$ is a group and $(G_2, d)$ is a metric group and $f: G_1 \rightarrow G_2$ is a mapping satisfying $d(f(x y), f(x) f(y))<\delta$, $\varepsilon, \delta>0?$.
	
	M. Rassias \cite{Rassias} developped Hyers theorem of stability, when he studied the stability of additive fuctional equatuon
	$$
	f(x+y)=f(x)+f(y).
	$$
	A Generalization of stability has been done by Gavrüta \cite{Gavruta} in 1994, anol ofter that, many anthors have been extensively studied the stability problems of functional equations and inequations, and there are a number of results concerning this problem. \cite{Orlicz,Hyers,Jung}
	
	In this paper witch is made up of 4 sections, we study the Hyere-Ulam stability of the following equation
	\begin{multline}\label{Eq-1}
				\phi(x+y-z)+\phi(x+z-y)+\phi(y+z-x)=\phi (x-y)+\phi(x-z)+\phi(z-y)\\+\phi(x)+\phi(y) +\phi(z)
	\end{multline}
	in section 2, using the direct method, we study the stability of \eqref{Eq-1} in modular space without $\Delta_2$-condition. In section 3, we study the statrility of \eqref{Eq-1} in modular space satisfying $\Delta_2$-condition. In section 4, we study the stability of \eqref{Eq-1} in $\beta$-homogeneous complex Banach space.
	\section{Stability of \eqref{Eq-1} in modular spaces without $\Delta_2$-condition}
	\begin{lemma}[\cite{Lan-2015}]\label{Lemma2.1}
		Let $\phi: X \rightarrow Y$ be a mapping satisfying \eqref{Eq-1}. Then $\phi$ is quadratic, and there exists a symmetric biadditive mapping:
		$A: X \times X \rightarrow Y$ such that: $f(x)=A(x, x)$ for all $x \in X$.
	\end{lemma}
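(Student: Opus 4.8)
The plan is to first extract, by judicious substitutions in \eqref{Eq-1}, enough structural information about $\phi$ to recognize it as a solution of the classical quadratic functional equation, and then to invoke the standard polarization argument to produce the biadditive map $A$.

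First I would substitute $x=y=z=0$ into \eqref{Eq-1}: the left-hand side becomes $3\phi(0)$ and the right-hand side becomes $6\phi(0)$, forcing $\phi(0)=0$. Next, setting $y=z=0$ and simplifying with $\phi(0)=0$ yields $2\phi(x)+\phi(-x)=3\phi(x)$, hence $\phi(-x)=\phi(x)$, so $\phi$ is even. The decisive substitution is $z=0$: after discarding the terms that vanish and using evenness to rewrite $\phi(y-x)=\phi(x-y)$ and $\phi(-y)=\phi(y)$, equation \eqref{Eq-1} collapses to
\begin{equation*}
\phi(x+y)+\phi(x-y)=2\phi(x)+2\phi(y),
\end{equation*}
which is precisely the quadratic functional equation. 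This establishes that $\phi$ is quadratic.

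For the representation I would set
\begin{equation*}
A(x,y)=\tfrac14\bigl[\phi(x+y)-\phi(x-y)\bigr].
\end{equation*}
Taking $y=x$ in the quadratic equation gives $\phi(2x)=4\phi(x)$, whence $A(x,x)=\tfrac14\phi(2x)=\phi(x)$; symmetry of $A$ is immediate from the evenness of $\phi$. The remaining and main obstacle is biadditivity. The key tool is the auxiliary identity
\begin{equation*}
\phi(x+y+w)+\phi(x)+\phi(y)+\phi(w)=\phi(x+y)+\phi(y+w)+\phi(x+w),
\end{equation*}
valid for every quadratic $\phi$ and obtained by repeated application of the quadratic equation.

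Applying this identity to the triples $(x_1,x_2,y)$ and $(x_1,x_2,-y)$ and subtracting (using evenness) gives
\begin{equation*}
\phi(x_1+x_2+y)-\phi(x_1+x_2-y)=\bigl[\phi(x_1+y)-\phi(x_1-y)\bigr]+\bigl[\phi(x_2+y)-\phi(x_2-y)\bigr],
\end{equation*}
which is exactly $A(x_1+x_2,y)=A(x_1,y)+A(x_2,y)$; additivity in the second variable then follows by symmetry. I expect the derivation of the auxiliary identity from the quadratic equation to require the most careful bookkeeping, though it is entirely routine, and once it is in hand the biadditivity of $A$ is immediate.
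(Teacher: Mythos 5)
Your proof is correct, but be aware that the paper offers no proof of this lemma to compare against: Lemma \ref{Lemma2.1} is imported verbatim, with citation, from \cite{Lan-2015}, and the in-paper content is just that citation. Your argument is the standard route and essentially the one used in that reference: the substitutions $x=y=z=0$, $y=z=0$, and $z=0$ do give $\phi(0)=0$, evenness, and the collapse of \eqref{Eq-1} to $\phi(x+y)+\phi(x-y)=2\phi(x)+2\phi(y)$ (with $z=0$ the left side is $\phi(x+y)+2\phi(x-y)$ and the right side is $\phi(x-y)+2\phi(x)+2\phi(y)$, so the cancellation is as you say), and the polarization $A(x,y)=\tfrac14\bigl[\phi(x+y)-\phi(x-y)\bigr]$ (legitimate here because $Y$ is a vector space, so dividing by $4$ is allowed) gives symmetry and $A(x,x)=\phi(x)$ exactly as you write. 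The one debt you leave outstanding, the three-variable identity
\begin{equation*}
\phi(x+y+w)+\phi(x)+\phi(y)+\phi(w)=\phi(x+y)+\phi(y+w)+\phi(x+w),
\end{equation*}
is indeed a consequence of the quadratic equation, so there is no gap: taking $(u,v)=(x+y,w)$ and $(u,v)=(x+w,y)$ in $\phi(u+v)+\phi(u-v)=2\phi(u)+2\phi(v)$ and adding gives $2\phi(x+y+w)+\phi(x+y-w)+\phi(x-y+w)=2\phi(x+y)+2\phi(x+w)+2\phi(y)+2\phi(w)$; now replace $\phi(x+y-w)+\phi(x-y+w)$ by $2\phi(x)+2\phi(y-w)$ (the instance $(u,v)=(x,y-w)$), replace $\phi(y-w)$ by $2\phi(y)+2\phi(w)-\phi(y+w)$, and divide by $2$. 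A slightly shorter classical alternative avoids the auxiliary identity altogether: subtract the instances $(u,v)=(x_1,x_2+y)$ and $(u,v)=(x_1,x_2-y)$ of the quadratic equation, write the same with $x_1$ and $x_2$ exchanged, add the two resulting equations and use evenness; this yields $A(x_1+x_2,y)=A(x_1,y)+A(x_2,y)$ directly, which is what your subtraction step produces.
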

	
	\begin{theorem}\label{Theorem2.1}
		Let $X$ be a vector space,
		 $Y_\rho$ be a $\rho$-complete convex modular space. Let $ \alpha$: $X^3 \rightarrow[0, \infty)$ be a function such that:
		
		\begin{equation}\label{Eq-2}
			\varphi(x, y, z)=\sum_{j=1}^{\infty} \frac{1}{4^j} \alpha\left(2^{j-1} x, 2^{j-1} y, 2^{j-1} z\right)<\infty , \text{ for all } x,y,z \in X.
		\end{equation}
		Let $\phi: X \rightarrow  Y_\rho $ a mapping satisfying $\phi(0)=0$ and:
		\begin{multline}\label{Eq-3}
			\rho(\phi(x+y-z)+\phi(x+z-y)+\phi(y+z-x)-\phi(x-y)-\phi(x-z)-\phi(z-y) \\ -\phi(x)-\phi(y)
			-\phi(z)) \leq \alpha(x, y, z),
		\end{multline} for all $x, y, z \in X$.
		Then there exists a unique quadratic mapping $h: X \rightarrow Y$ such that: 
		\begin{equation}\label{Eq-4}
			\rho(\phi(x)-h(x)) \leq \varphi(x, x, 0)
		\end{equation}
		and: 
		$$ h(x)=\rho-\operatorname{limit} \frac{\phi\left(2^n x\right)}{4^n} ; \qquad x \in X.$$
	\end{theorem}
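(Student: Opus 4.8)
The plan is to run the classical Hyers direct method, replacing at every step the missing homogeneity and triangle inequality of a normed space by the convexity inequality recorded in the Remark: taken with unit coefficients it yields the subadditivity $\rho(u+v)\le\rho(u)+\rho(v)$, and taken with a single coefficient $\le 1$ it yields the downward scaling $\rho(\lambda u)\le\lambda\rho(u)$. The summability hypothesis \eqref{Eq-2} supplies the convergent tails that send the error to zero. The hard part, and the thing to watch throughout, is that in a modular space without the $\Delta_2$-condition one may never scale a vector \emph{up} inside $\rho$; consequently every telescoping and every limit must be organized so that only the admissible factors $4^{-n}\le 1$ ever appear in front of a vector.

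First I would isolate the fundamental one-variable estimate. Substituting $y=x$, $z=0$ in \eqref{Eq-3} and using $\phi(0)=0$ gives $\rho(\phi(2x)-3\phi(x)-\phi(-x))\le\alpha(x,x,0)$, while $y=z=0$ gives the even-part control $\rho(\phi(-x)-\phi(x))\le\alpha(x,0,0)$; combining the two by subadditivity produces $\rho(\phi(2x)-4\phi(x))\le\alpha(x,x,0)$ once the even-part discrepancy is absorbed into the control function. Convexity in the shrinking direction then turns this into $\rho(\tfrac14\phi(2x)-\phi(x))\le\tfrac14\alpha(x,x,0)$, which is the seed of the entire iteration.

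Next I would set $\phi_n(x)=\phi(2^nx)/4^n$ and telescope: for $n>m$ one writes $\phi_n(x)-\phi_m(x)=\sum_{j=m}^{n-1}4^{-j}(\tfrac14\phi(2^{j+1}x)-\phi(2^jx))$. Applying the convexity inequality of the Remark with the coefficients $4^{-j}\le 1$, and then the seed estimate at the point $2^jx$, bounds this by $\sum_{j=m}^{n-1}4^{-(j+1)}\alpha(2^jx,2^jx,0)$, which is a tail of the convergent series \eqref{Eq-2}. Hence $\{\phi_n(x)\}$ is $\rho$-Cauchy, and the $\rho$-completeness of $Y_\rho$ lets me define $h(x)$ as its $\rho$-limit, which is exactly the formula asserted. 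Taking $m=0$ and letting $n\to\infty$ — bounding $\rho(\phi(x)-h(x))$ by $\rho(\phi(x)-\phi_n(x))+\rho(\phi_n(x)-h(x))$ through subadditivity, the first summand being $\le\varphi(x,x,0)$ and the second tending to $0$ by $\rho$-convergence — establishes the estimate \eqref{Eq-4}.

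To see that $h$ is quadratic I would apply \eqref{Eq-3} at $(2^nx,2^ny,2^nz)$, so that the modular of the defect of \eqref{Eq-1} for $\phi$ there is $\le\alpha(2^nx,2^ny,2^nz)$; scaling by $4^{-n}$ is legitimate since $4^{-n}\le 1$, and $4^{-n}\alpha(2^nx,2^ny,2^nz)\to 0$ because the general term of the convergent series \eqref{Eq-2} tends to $0$. Comparing the defect of \eqref{Eq-1} for $h$ with this scaled defect for $\phi$ through the nine differences $h(u)-\phi_n(u)\to 0$, subadditivity forces the modular of the defect of \eqref{Eq-1} for $h$ to vanish, so $h$ satisfies \eqref{Eq-1} exactly and is quadratic by Lemma \ref{Lemma2.1}. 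For uniqueness, if $h'$ is another such solution, quadraticity gives $h(x)-h'(x)=4^{-n}(h(2^nx)-h'(2^nx))$; splitting through $\phi(2^nx)$, using subadditivity together with $\rho(4^{-n}\,\cdot\,)\le 4^{-n}\rho(\cdot)$ and the bound \eqref{Eq-4}, controls $\rho(h(x)-h'(x))$ by $2\sum_{k>n}4^{-k}\alpha(2^{k-1}x,2^{k-1}x,0)$, a tail independent of the left-hand side; letting $n\to\infty$ gives $\rho(h(x)-h'(x))=0$, hence $h=h'$. The only genuinely delicate point is the bookkeeping of these down-scalings and the verification that each limit transfer uses the Remark's convexity inequality rather than a forbidden doubling.
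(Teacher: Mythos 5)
Your whole argument leans on the assertion that the convexity inequality ``taken with unit coefficients'' yields subadditivity $\rho(u+v)\le\rho(u)+\rho(v)$. This is false in a modular space: the modular axiom, and the Remark's generalization of it, allow $\rho\left(\sum_i\lambda_iu_i\right)\le\sum_i\lambda_i\rho(u_i)$ only when the $\lambda_i\ge0$ have sum at most $1$, and unit coefficients sum to $2$. Worse, subadditivity is exactly the forbidden doubling in disguise: it would give $\rho(2u)=\rho(u+u)\le2\rho(u)$, i.e.\ the $\Delta_2$-condition with $\tau=2$, which is precisely the hypothesis this section of the paper dispenses with, so you cannot have it for free. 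Every step you route through ``subadditivity'' therefore fails as written: the passage to the estimate \eqref{Eq-4}, where you bound $\rho(\phi(x)-h(x))$ by $\rho\left(\phi(x)-\frac{\phi(2^nx)}{4^n}\right)+\rho\left(\frac{\phi(2^nx)}{4^n}-h(x)\right)$; the quadraticity step, where ten modulars are added with unit coefficients; the uniqueness step, where you split $\rho(h(x)-h'(x))$ through $\phi(2^nx)$; and your seed estimate, where you combine the even-part control $\rho(\phi(-x)-\phi(x))\le\alpha(x,0,0)$ with the $y=x$, $z=0$ substitution. (Your observation that this substitution really produces $\phi(2x)-3\phi(x)-\phi(-x)$ rather than $\phi(2x)-4\phi(x)$ is a genuine subtlety that the paper glosses over; but your repair is invalid, and even granting subadditivity it would give the bound $\alpha(x,x,0)+\alpha(x,0,0)$, not $\alpha(x,x,0)$, so the constant in \eqref{Eq-4} would have to change.)

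What survives is the Cauchy estimate, since there the coefficients $4^{-j}$ with $j\ge m\ge1$ sum to at most $1/3$. The remaining steps require the bookkeeping that constitutes the actual content of the paper's proof, all of it designed to keep coefficient sums $\le1$: (i) the defect of $h$ is scaled by $\frac{1}{12}$, so that the ten terms carry total weight $\frac{10}{12}\le1$, and one concludes from $\rho\left(\frac{1}{12}(\mathrm{defect})\right)=0$ that the defect vanishes; (ii) uniqueness is run on $\rho\left(\frac{h_1(x)-h_2(x)}{2}\right)$ with two weights $\frac12$; (iii) most delicately, \eqref{Eq-4} cannot be recovered from your telescoping with $m=0$ either, because $\sum_{j=0}^{n-1}4^{-j}>1$; the paper first proves $h(2x)=4h(x)$ (again by a weighted decomposition) and then writes $h(x)-\phi(x)=\sum_{k=1}^n\frac{\phi(2^kx)-4\phi(2^{k-1}x)}{4^k}+\frac14\left(h(2x)-\frac{\phi(2^{n-1}\cdot2x)}{4^{n-1}}\right)$, whose weights sum to less than $\frac13+\frac14<1$, and only then lets $n\to\infty$. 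Your proposal is missing precisely these devices, and without them the direct method does not close in a modular space lacking the $\Delta_2$-condition.
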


	\begin{proof}
		Letting $x=y$ and $z=0$ in \eqref{Eq-3}, we get
		$$
		\rho(\phi(2 x)-4 \phi(x)) \leq \alpha(x, x, 0).
		$$
		Then 
		\begin{equation}\label{Eq-5}
			\rho\left(\frac{1}{4} \phi(2 x)-\phi(x)\right) \leq \frac{1}{4} \alpha(x, x, 0).
		\end{equation}
		by simple induction, we have:
		\begin{equation}\label{Eq-6}
			\rho\left(\frac{1}{4^k} \phi\left(2^k x\right)-\phi(x)\right) \leq \sum_{j=1}^k \frac{1}{4^j} \alpha\left(2^{j-1} x, 2^{j-1} x, 0\right)
		\end{equation}
		for all $x \in X$, and all positif integer $k$.
		\\
		For $k=1$, we obtain \eqref{Eq-5}. 
		Suppose that \eqref{Eq-6} holds for $k \in \mathbb{N}$. 
		We have:
		\begin{align*}
			\rho\left(\frac{1}{4^{k+1}} \phi\left(2^{k+1} x\right)-\phi (x)\right)&=\rho\left(\frac{1}{4}\left(\frac{\phi\left(2^k \cdot 2 x\right)}{4^k}-\phi(2 x)\right)+\frac{1}{4} \phi(2 x)-\phi(x)\right) \\
			&\leq \frac{1}{4} \rho\left(\frac{\phi\left(2^k. 2 x\right)}{4^k}-\phi(2 x)\right)+\frac{1}{4} \rho(\phi(2 x)-4 \phi(x)) \\
			&\leq \sum_{j=1}^k \frac{1}{4^{j+1}} \alpha\left(2^j x, 2^j x, 0\right)+\frac{1}{4} \alpha(x, x, 0) \\
			&=\sum_{j=1}^{k+1} \frac{1}{4^j} \alpha\left(2^{j-1} x, 2^{j-1} x, 0\right)
		\end{align*}
		Thus, \eqref{Eq-6} holds for every $k \in\mathbb{N}$.\\
		Let $m, n$ be positive integers with $n>m$, we write
		\begin{align}
			\rho\left(\frac{\phi\left(2^n x\right)}{4^n}-\frac{\phi\left(2^m x\right)}{4^m}\right) & =\rho\left(\frac{1}{4^m}\left(\frac{\phi\left(2^{n-m} \cdot 2^m x\right)}{4^{n-m}}-\phi\left(2^m x\right)\right)\right. \nonumber\\
			& \leq \frac{1}{4^m} \sum_{j=1}^{n-m} \frac{1}{4^j} \alpha\left(2^{m+j-1}x, 2^{m+j-1} x, 0\right) \nonumber\\
			& =\sum_{k=m+1}^n \frac{1}{4^k} \alpha\left(2^{k-1} x, 2^{k-1} x, 0\right) \label{Eq-7}
		\end{align}
		it follows from \eqref{Eq-7} and \eqref{Eq-2} that the sequence $\left\{\frac{\phi(2^n x)}{4^n}\right\}$ is a $\rho$-Cauchy sequence in $Y_p$. 
		Since $Y_\rho $ is $\rho$-complete, then the sequence $\left\{\frac{\phi\left(2^n x \right)}{4^n}\right\}$ is $\rho$-convergent to $h(x)$. 
		Thus, we write
		\begin{equation}\label{Eq-8}
		h(x)=	\rho-\lim \frac{\phi\left(2^n x\right)}{4^n}; x \in X .
		\end{equation}
		Now, we see that 
		\begin{align*}
			\rho\left(\frac{h(2 x)-4 h(x)}{4^3}\right)&=\rho\left(\frac{1}{4^3}\left(h(2 x)-\frac{\phi\left(2^{n+1} x \right)}{4^n}\right)+\frac{1}{4}\left(\frac{1}{4} \frac{\phi\left(2^{n+1} x \right)}{4^{n+1}}-\frac{1}{4} h(x)\right)\right) \\
			& \leq \frac{1}{4^3} \rho \left(h(2 x)-\frac{\phi\left(2^{n+1} x\right)}{4^n}\right)+\frac{1}{16} \rho\left(\frac{\phi\left(2^{n+1} x \right)}{4^{n+1}}-h(x)\right)\\
			&		\to 0 \text{ as }n \to \infty.
		\end{align*}
		for all  $x \in X$.
	\\
		Then
		\begin{equation}\label{Eq-9}
			h(2 x)=4 h(x).
		\end{equation}
		Next, we have 
		\begin{align*}
			\rho(h(x)-\phi(x)) & =\rho\left(\sum_{k=1}^n \frac{\phi\left(2^k x\right) - 4 \phi\left(2^{k-1} x\right)}{4^k}+\left(h(x)-\frac{\phi\left(2^n x\right)}{4^n}\right)\right) \\
			& =\rho \left(\sum_{k=1}^n \frac{\phi\left(2^k x\right)-4 \phi\left(2^{k-1} x\right)}{4^k}+\frac{1}{4}\left(h(2 x)-\frac{\phi\left(2^{n-1} \cdot 2 x\right)}{4^{n-1}}\right)\right)
		\end{align*}
		Since 
		$$\sum_{k=1}^n \frac{1}{4^k}+\frac{1}{4}<1,$$ 
		it follows that:
		\begin{align}
			\rho(h(x)-\phi(x)) &\leq \sum_{k=1}^n \frac{1}{4^k} \rho \left(\phi\left(2^k x\right)-4 \phi\left(2^{k-1} x\right)\right)+\frac{1}{4} \rho \left(h(2 x)-\frac{\phi\left(2^{n-1} 2 x\right)}{4^{n-1}}\right) \nonumber \\
			&\leq \sum_{k=1}^n \frac{1}{4^k} \alpha\left(2^{k-1} x, 2^{k-1} x, 0\right)+\frac{1}{4} \rho\left(h(2 x)-\frac{\phi\left(2^{n-1} 2 x\right)}{4^{n-1}}\right) \label{Eq-10}
		\end{align}
		Letting $n \rightarrow \infty$ in \eqref{Eq-10}, we get:
		$$
		\rho(h(x)-\phi(x)) \leq \varphi(x, x, 0).
		$$
		Then, we arrive at \eqref{Eq-4}.\\
		Now, we prove that $h$ is quadratic. 
		Firstly, we show  that:
		\begin{align}
			&\rho\left(\frac{\phi\left(2^j(x+y-z)\right)}{4^j}+\frac{\phi\left(2^j(x+z-y)\right)}{4^j}+\frac{\phi\left(2^j(y+z-x)\right)}{4^j} \right. \nonumber\\
			&\left. \qquad- \frac{\phi\left(2^j(x-y)\right)}{4^j}-\frac{\phi\left(2^j(x-z)\right)}{4^j}
			 -\frac{\phi\left(2^j(z-y)\right)}{4^j}-\frac{\phi\left(2^j x\right)}{4^j}-\frac{\phi\left(2^j y\right)}{4^j}-\frac{\phi\left(2^j z\right)}{4^j}\right)\nonumber\\
			&\leq \frac{1}{4^j} \rho\left(\phi\left(2^j(x+y-z)\right)+\phi\left(2^j(x+z-y)\right)+\phi\left(2^j(y+z-x)\right)\right. \nonumber\\
			&\qquad\left.-\phi\left(2^j(x-y)\right)- \phi\left(2^j(x-z)\right) - \phi\left(2^j(z-y)\right)-\phi\left(2^j x\right)-\phi\left(2^j y\right)-\phi\left(2^j z\right)\right) \nonumber\\
			&\leq \frac{1}{4^j} \alpha\left(2^j x, 2^j y, 2^j z\right) \rightarrow 0 \text { as } j \rightarrow \infty .   \label{Eq-11}
		\end{align}
		by \eqref{Eq-11}, we have the following inequality:
		\begin{multline*}
		 \rho\left(\frac{1}{12}(h(x+y-z)+h(x+z-y)+h(y+z-x)-h(x-y)-h(x-z)-h(z-y)\right.\\
		 \left.  -h(x)-h(y)-h(z))\right) \\
		\leq \frac{1}{12}\left[\rho\left(h(x+y-z)-
		\frac{\phi\left(2^j(x+y-z)\right)}{4^j}\right)
	+\rho\left(h(x+z-y)-\frac{\phi\left(2^j(x+z-y)\right)}{4^j}\right) 	\right.\\
	\left.
	+\rho\left(h(y+z-x)-\frac{\phi\left(2^j(y+z-x)\right)}{4^j}\right) 
	+\rho\left(h(x-y)-\frac{\phi\left(2^j(x-y)\right)}{4^j}\right)\right. \\\left. 
	+\rho\left(h(x-z)-\frac{\phi\left(2^j(x-z)\right)}{4^j}\right)
	+\rho\left(h(z-y)-\frac{\phi\left(2^j(z-y)\right)}{4^j}\right) \right.\\
	\left.
	+\rho\left(h(x)-\frac{\phi(2^j x)} {4^j}\right)
	+\rho\left(h(y)-\frac{\phi(2^j y)}{4^j}\right)
	+\rho\left(h(z)-\frac{\phi(2^j z)}{4^j}\right)\right]\\
	+\frac{1}{12} \rho\left(\frac{\phi(2^j(x+y-z))}{4^j}+\frac{\phi\left(2^j(x+z-y)\right)}{4^j}
	+\frac{\phi\left(2^j(y+z-x)\right)}{4^j}-\frac{\phi\left(2^j(x-y)\right)}{4^j}\right. \\
	\left.-\frac{\phi(2^j (x-z))}{4^j}-\frac{\phi\left(2^j(z-y)\right)}{4^j}-
	\frac{\phi(2^j x)}{4^j} - \frac{\phi(2^j y)}{4^j} - \frac{\phi(2^j z)}{4^j}\right) 
	\rightarrow \text { as } j \rightarrow \infty.
	\end{multline*}
		Then we get:
		$$
		h(x+y-z)+h(x+z-y)+h(y+z-x)-h(x-y)-h(x-z)-h(z-y)-h(x)-h(y)-h(z)=0
		$$
		Hence by Lemma \ref{Lemma2.1}, we conclude that $h$ is quadratic.\\
		Finally for proving the uniqueness of $h$, letting $h_1$ and $h_2$ two quadratic mapping satisfying \eqref{Eq-4}.
		\\
		We have:
		\begin{align*}
			\rho\left(\frac{h_1(x)-h_2(x)}{2}\right)&=\rho\left(\frac{1}{2}\left(\frac{h_1\left(2^k x\right)}{4^k}-\frac{\phi\left(2^kx \right)}{4^k}\right)+ \frac{1}{2}\left(\frac{\phi\left(2^k x\right)}{4^k}-\frac{h_2 (2^kx)}{4^k}\right)\right) \\
			& \leq \frac{1}{2} \rho\left(\frac{h_1(2^k x)}{4^k}-\frac{\phi\left(2^k x\right)}{4^k}\right)+\frac{1}{2} \rho\left(\frac{\phi\left(2^k x\right)}{4^k}-\frac{h_2\left(2^k x\right)}{4^k}\right) \\
			& \leq \frac{1}{2} \cdot \frac{1}{4^k}\left\{\rho\left(h_1\left(2^k x\right)-\phi\left(2^k x\right)\right)+\rho\left(\phi\left(2^k x\right)-h_2\left(2^k x\right)\right)\right\} \\
			& \leq \frac{1}{4^k} \varphi\left(2^k x, 2^k x, 0\right) \\
			& =\sum_{l=k+1}^{\infty} \frac{1}{4^l} \alpha\left(2^{l-1} x, 2^{l-1} x, 0\right)
		\end{align*}
		$\rightarrow 0 \text { as } k \rightarrow \infty$.
		Then $h_1=h_2$ and this complete the proof. 
	\end{proof}
	Now, if we put $\alpha=\varepsilon>0$, we obtain a classical Ulam stability of \eqref{Eq-1}.
	
	\begin{corollary}
		Let $X$ be a vector space and $ Y_\rho $ be a $\rho$-complete convexe modular space. 
		Let $\phi: X \rightarrow  Y_\rho $ a mapping satisfying $\phi(0)=0$ and
		$$
		\rho(\phi(x+y-z)+\phi(x+z-y)+\phi(y+z-x)-\phi(x-y)-\phi(x-z)-\phi(z-y)-\phi(x)-\phi(y)-\phi(z)) \leq \varepsilon
		$$
		for all $x, y, z \in X$. Then there exists a unique quadratic mapping $h: X \rightarrow  Y_\rho $ such that:
		$$
		\rho(\phi(x)-h(x)) \leq \frac{\varepsilon}{3} ; x \in X .
		$$
	\end{corollary}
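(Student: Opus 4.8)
The plan is to obtain this corollary as an immediate specialization of Theorem \ref{Theorem2.1}, taking the control function to be the constant $\alpha(x,y,z) = \varepsilon$ for all $x,y,z \in X$. With this choice the perturbation hypothesis \eqref{Eq-3} becomes precisely the inequality assumed in the corollary, so the only thing I need to check before applying the theorem is the summability condition \eqref{Eq-2}.

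First I would verify that the series defining $\varphi$ converges for the constant $\alpha$. Substituting $\alpha \equiv \varepsilon$ turns the defining sum into a geometric series,
\[
\varphi(x,y,z) = \sum_{j=1}^{\infty} \frac{1}{4^{j}}\,\varepsilon
= \varepsilon \sum_{j=1}^{\infty} \frac{1}{4^{j}}
= \varepsilon \cdot \frac{1/4}{1-1/4}
= \frac{\varepsilon}{3} < \infty,
\]
for all $x,y,z \in X$, so \eqref{Eq-2} holds. Since the modular space $Y_\rho$ is assumed $\rho$-complete and convex and $\phi(0)=0$, all hypotheses of Theorem \ref{Theorem2.1} are now satisfied.

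Next I would invoke Theorem \ref{Theorem2.1} directly. It yields a unique quadratic mapping $h\colon X \to Y_\rho$, defined by $h(x) = \rho\text{-}\lim \phi(2^{n}x)/4^{n}$, satisfying the estimate \eqref{Eq-4}, namely $\rho(\phi(x)-h(x)) \le \varphi(x,x,0)$. Evaluating the already-computed $\varphi$ at the point $(x,x,0)$ gives $\varphi(x,x,0) = \varepsilon/3$, which is exactly the desired bound $\rho(\phi(x)-h(x)) \le \varepsilon/3$. The existence, uniqueness, and quadratic character of $h$ are inherited verbatim from the theorem, so nothing further must be argued.

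I do not anticipate any genuine obstacle: the sole nontrivial computation is summing the geometric series $\sum_{j=1}^{\infty} 4^{-j} = 1/3$, and everything else is a direct appeal to Theorem \ref{Theorem2.1}. The only point requiring a moment's care is confirming that the constant function meets the convergence requirement \eqref{Eq-2}, which the geometric-series evaluation settles at once.
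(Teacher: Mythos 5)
Your proposal is correct and is exactly the paper's intended argument: the paper introduces this corollary with the remark ``if we put $\alpha=\varepsilon>0$,'' i.e., it is obtained by specializing Theorem \ref{Theorem2.1} to the constant control function, with the bound $\varphi(x,x,0)=\varepsilon\sum_{j=1}^{\infty}4^{-j}=\varepsilon/3$ computed just as you do. Nothing is missing.
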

	
	\begin{corollary}
		Let $X$ be a vector space and $ Y_\rho $ be a $\rho$-complete convex modular space. 
		Let $\theta>0$ and $0<p<2$ be real numbers, and $\phi: X \rightarrow  Y_\rho $ is a mapping satisfying:
		\begin{multline*}
			\rho(\phi(x+y-z)+\phi(x+z-y)+\phi(y+z-x)-\phi(x-y)-\phi(x-z)-\phi(z-y)-\phi(x)-\phi(y) \\
			-\phi(z)) \leq \theta\left(\|x\|^p+\|y\|^p+\|z\|^p\right) \text {, for all  } x,y,z\in X.
		\end{multline*}
		Then there exists a unique quadratic mapping $h: X \rightarrow  Y_\rho $ such that:
		$$
		\rho(\phi(x)-h(x)) \leq \frac{2 \theta\|x\|^p}{2-2^p}, \qquad x \in X .
		$$
	\end{corollary}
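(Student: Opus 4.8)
The plan is to read this Corollary as the special case of Theorem~\ref{Theorem2.1} obtained by choosing the control function $\alpha(x,y,z)=\theta(\|x\|^{p}+\|y\|^{p}+\|z\|^{p})$. With this choice the standing hypotheses $\phi(0)=0$ and the inequality \eqref{Eq-3} are precisely the bound assumed in the Corollary, so once the summability condition \eqref{Eq-2} is verified, the existence of a unique quadratic map $h$, the formula $h(x)=\rho\text{-}\lim_{n}\phi(2^{n}x)/4^{n}$, and the estimate $\rho(\phi(x)-h(x))\le\varphi(x,x,0)$ of \eqref{Eq-4} are all delivered by the theorem with no further argument. Thus the entire Corollary reduces to evaluating $\varphi$ for this particular $\alpha$.

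First I would verify \eqref{Eq-2}. Using the $p$-homogeneity of the norm one has $\alpha(2^{j-1}x,2^{j-1}y,2^{j-1}z)=2^{(j-1)p}\alpha(x,y,z)$, so the series defining $\varphi$ becomes $\alpha(x,y,z)$ multiplied by a geometric series of ratio $2^{\,p-2}$ against the weight $4^{-j}$. This is the single place where the hypothesis $0<p<2$ is used: the ratio satisfies $2^{\,p-2}<1$ if and only if $p<2$, which is exactly what makes the series converge and hence $\varphi(x,y,z)<\infty$ for all $x,y,z\in X$. This confirms that Theorem~\ref{Theorem2.1} applies.

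Next I would specialize to $y=x$ and $z=0$. Since $\phi(0)=0$ and $\|0\|^{p}=0$, the evaluation of $\varphi(x,x,0)$ collapses to $2\theta\|x\|^{p}$ times the closed-form value of the same geometric series; summing it and substituting the result into \eqref{Eq-4} yields the estimate recorded in the statement, namely $\rho(\phi(x)-h(x))\le\dfrac{2\theta\|x\|^{p}}{2-2^{p}}$ for every $x\in X$. No separate uniqueness argument is needed, as uniqueness is already part of the conclusion of Theorem~\ref{Theorem2.1}.

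The only genuine computation is therefore the summation of the geometric series, and the main point to be careful about is the bookkeeping of the exponents of $2$: convergence is governed by the ratio $2^{\,p-2}$, while the precise constant in the denominator is fixed by how the homogeneity factor $2^{(j-1)p}$ combines against the $4^{-j}$ weight and the index shift in \eqref{Eq-2}. Getting this exponent arithmetic right is essentially the whole content of the proof, since every other ingredient—existence of $h$, its explicit limit formula, and its uniqueness—is inherited verbatim from Theorem~\ref{Theorem2.1}.
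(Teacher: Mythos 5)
Your overall strategy is exactly the intended one: the paper offers no separate proof of this corollary, treating it as an immediate specialization of Theorem~\ref{Theorem2.1} with $\alpha(x,y,z)=\theta\left(\|x\|^{p}+\|y\|^{p}+\|z\|^{p}\right)$, and your verification of \eqref{Eq-2} via the geometric ratio $2^{p-2}<1$ (this being where $0<p<2$ enters) is correct. The gap sits precisely at the step you yourself call ``essentially the whole content of the proof'' and then decline to carry out: the summation. Doing it explicitly, $\alpha\left(2^{j-1}x,2^{j-1}x,0\right)=2\theta\,2^{(j-1)p}\|x\|^{p}$, so
$$
\varphi(x,x,0)=2\theta\|x\|^{p}\sum_{j=1}^{\infty}\frac{2^{(j-1)p}}{4^{j}}
=2\theta\|x\|^{p}\cdot\frac{1}{4}\sum_{k=0}^{\infty}\left(2^{p-2}\right)^{k}
=\frac{2\theta\|x\|^{p}}{4-2^{p}},
$$
not $\dfrac{2\theta\|x\|^{p}}{2-2^{p}}$. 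Your assertion that summing the series ``yields the estimate recorded in the statement'' is therefore false as a computation: the method delivers the denominator $4-2^{p}$, and no index shift or rearrangement of \eqref{Eq-2} produces $2-2^{p}$.

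The discrepancy matters on part of the stated range. For $0<p<1$ one has $0<2-2^{p}<4-2^{p}$, so the bound actually obtained implies the (weaker) stated one and the corollary survives; but at $p=1$ the stated right-hand side is undefined, and for $1<p<2$ it is negative, which a nonnegative modular $\rho(\phi(x)-h(x))$ cannot satisfy unless $\phi=h$. In other words, the constant as printed in the corollary appears to be an error for $\dfrac{2\theta}{4-2^{p}}$, and a complete proof attempt must either derive the correct constant or observe that the printed one cannot follow from \eqref{Eq-4} on the whole interval $0<p<2$. By deferring the exponent bookkeeping you ended up endorsing a bound that your own (otherwise correct) argument does not produce; everything else in your proposal --- the appeal to Theorem~\ref{Theorem2.1} for existence, the limit formula for $h$, and the inherited uniqueness --- is fine and matches the paper's intent.
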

	\section{Stability of \eqref{Eq-1} in modular space satisfying $\Delta_2$-condition }
	\begin{theorem}\label{Theorem3.1}
		Let $X$ be a vector space, $ Y_\rho $ be a $\rho$-complete convex modular space  satisfying $\Delta_2$-condition. 
		If there exists a function: $\alpha: X^3 \rightarrow[0, \infty)$ such that $\alpha(0,0)=0$ and 
		\begin{enumerate}
			\item[(i)] 	
			\begin{equation}\label{Eq-12}
				\varphi(x, y, z)=\sum_{j=1}^{\infty}\left(\frac{\tau^3}{2}\right)^j \alpha\left(\frac{x}{2^j}, \frac{y}{2^j}, \frac{z}{2^j}\right)<\infty 
			\end{equation}
			and 
			\begin{equation}\label{Eq-13}
				\lim _{n \rightarrow \infty} \tau^{2 n} \alpha\left(\frac{x}{2^n}, \frac{y}{2^n}, \frac{z}{2^2}\right) = 0,
			\end{equation}
			\item[(ii)] 
			\begin{multline}\label{Eq-14}
				\rho (\phi(x+y-z)+\phi(x+z-y)+\phi(y+z-x)-\phi(x-y)-\phi(x-z)-\phi(z-y)\\
				-\phi(x)-\phi(y)-\phi(z)) \leq \alpha(x, y, z)
			\end{multline}
			for all $x, y, z \in X$. Then there exists a unique quadratic mapping $h: X \rightarrow Y_\rho$ such that 
		\end{enumerate}
		\begin{equation}\label{Eq-15}
			\rho(\phi(x)-h(x)) \leq \frac{1}{2 \tau}  \varphi(x, x, x)
			\text{ for all } x \in X.
		\end{equation}
	\end{theorem}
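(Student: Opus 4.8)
The plan is to mirror the direct-method argument of Theorem \ref{Theorem2.1}, but to run the iteration in the opposite direction: instead of the sequence $\phi(2^n x)/4^n$, I would work with $\psi_n(x):=4^n\phi(x/2^n)$. The reason is that here the $\Delta_2$-condition $\rho(2u)\le\tau\rho(u)$, which iterates to $\rho(2^m u)\le\tau^m\rho(u)$, is exactly what is needed to control the modular of quantities that have been \emph{multiplied} by the large factor $4^n=2^{2n}$; convexity of $\rho$ alone would not suffice, which is precisely why this hypothesis is added in Section 3.

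First I would extract the basic one-step estimate. Setting $x=y$ and $z=0$ in \eqref{Eq-14} and using $\phi(0)=0$ gives $\rho(\phi(2x)-4\phi(x))\le\alpha(x,x,0)$; replacing $x$ by $x/2^{k}$ yields $\rho\big(\phi(x/2^{k-1})-4\phi(x/2^{k})\big)\le\alpha(x/2^{k},x/2^{k},0)$, so that the consecutive differences $\psi_k-\psi_{k-1}=4^{k-1}\big(4\phi(x/2^{k})-\phi(x/2^{k-1})\big)$ are under control. To bound $\rho(\psi_n-\psi_m)$ for $n>m$ I would expand the telescoping sum $\psi_n-\psi_m=\sum_{k=m+1}^n(\psi_k-\psi_{k-1})$ and insert the weights $2^{-(k-m)}$, which sum to less than $1$; convexity then distributes $\rho$ over the sum, and the $\Delta_2$-condition converts each factor $2^{k-m}4^{k-1}$ into a power of $\tau$. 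This produces a bound whose tail is governed by the convergent series \eqref{Eq-12}, so $\{\psi_n(x)\}$ is $\rho$-Cauchy; $\rho$-completeness then yields the limit $h(x)=\rho\text{-}\lim 4^n\phi(x/2^n)$.

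For the estimate \eqref{Eq-15} I would take $m=0$ above to get $\rho(\phi(x)-\psi_n(x))\le B_n(x)$ with $B_n(x)\to\tfrac{1}{\tau^{2}}\varphi(x,x,0)$, and then pass to $h$ through the decomposition $\phi-h=\tfrac12\big(2(\phi-\psi_n)\big)+\tfrac12\big(2(\psi_n-h)\big)$: convexity splits $\rho(\phi-h)$ into two halves, and the $\Delta_2$-condition turns each $\rho(2(\cdot))$ into $\tau\rho(\cdot)$. Since $\rho(\psi_n-h)\to0$, letting $n\to\infty$ leaves precisely the prefactor $\tfrac{\tau}{2}\cdot\tfrac{1}{\tau^{2}}=\tfrac{1}{2\tau}$, which is the source of the constant in \eqref{Eq-15}. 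To show $h$ is quadratic I would reproduce the nine-term splitting used after \eqref{Eq-11}: each term $\rho\big(h(\cdot)-4^n\phi(\cdot/2^n)\big)\to0$ by $\rho$-convergence, while the remaining rescaled residual is bounded by $\tau^{2n}\alpha(x/2^n,y/2^n,z/2^n)$, which tends to $0$ by \eqref{Eq-13}; hence $h$ satisfies \eqref{Eq-1} and is quadratic by Lemma \ref{Lemma2.1}. Uniqueness follows as in Theorem \ref{Theorem2.1}: two solutions $h_1,h_2$ of \eqref{Eq-15} are compared through $\rho\big(\tfrac12(h_1-h_2)\big)$, and the $\Delta_2$-condition together with the vanishing tail of \eqref{Eq-12} forces $h_1=h_2$.

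The main obstacle I anticipate is the bookkeeping in the Cauchy estimate, where the growth factors $\tau^{3k}$ coming from $\Delta_2$ must be balanced against the convexity weights $2^{-k}$ so that the resulting series is exactly $\sum(\tau^3/2)^k\alpha(\cdot)$; getting this matching right, rather than a divergent or mismatched series, is what makes hypothesis \eqref{Eq-12} the correct one. A secondary subtlety, absent in the normed-space setting, is that $\rho$ is neither subadditive nor automatically lower semicontinuous, so the passage from the finite-$n$ bound to the bound on $\rho(\phi-h)$ genuinely requires the convexity-plus-$\Delta_2$ step described above rather than a naive limit.
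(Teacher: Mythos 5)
Your proposal is correct and follows essentially the same route as the paper's own proof: the same approximating sequence $4^n\phi(x/2^n)$, the same telescoping sum with convexity weights $2^{-j}$ and $\Delta_2$-factors producing exactly the series $\sum_j(\tau^3/2)^j\alpha(\cdot)$, the same $\tfrac{\tau}{2}$-splitting that yields the constant $\tfrac{1}{2\tau}$, the same nine-term decomposition combined with the limit hypothesis $\tau^{2n}\alpha(x/2^n,y/2^n,z/2^n)\to 0$ and Lemma \ref{Lemma2.1} to get quadraticity, and the same tail-of-the-series uniqueness argument. The only divergence is that your computation honestly yields the bound $\tfrac{1}{2\tau}\varphi(x,x,0)$ rather than the stated $\tfrac{1}{2\tau}\varphi(x,x,x)$ of \eqref{Eq-15}; this discrepancy originates in the paper itself, whose proof asserts $\rho\bigl(4\phi(x/2)-\phi(x)\bigr)\le\alpha(x/2,x/2,x/2)$ after the substitution $x=y$, $z=0$, even though that substitution actually produces $\alpha(x/2,x/2,0)$, so your version is the faithful one.
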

	
	\begin{proof}
		Letting $x=y$ and $z=0$ in \eqref{Eq-14}, after that, we replace $x$ with $\frac{x}{2}$, we get 
		$$\rho\left(4 \phi\left(\frac{x}{2}\right)-\phi(x)\right) \leq \alpha\left(\frac{x}{2}, \frac{x}{2}, \frac{x}{2}\right)$$
		for all $x \in X$. 
		Now, we have the following expression
		\begin{align*}
			\rho \left(4^n \phi \left(\frac{x}{2^n}\right)-\phi(x)\right) & = \rho \left(\sum_{j=1}^n \frac{1}{2^j}\left(2^{3 j-2} \phi\left(\frac{x}{2^{j-1}}\right)-2^{3 j} \phi\left(\frac{x}{2^j}\right)\right)\right) \\
			& \leq \frac{1}{\tau^2} \sum_{j=1}^n\left(\frac{\tau^3}{2}\right)^j \alpha\left(\frac{x}{2^j}, \frac{x}{2^j}, \frac{x}{2^j}\right)
		\end{align*}
		for all $x \in X$.
		
		Let $m$ and $n$ be a positive integers. We have
		\begin{align*}
			\rho\left(4^m \phi\left(\frac{x}{2^m}\right) - 4^{n+m}  \phi\left(\frac{x}{2^{n+m}}\right)\right) &\leq \tau^{2 m} \rho \left(\phi\left(\frac{x}{2^m}\right)-4^n \phi\left(\frac{x}{2^{n+m}}\right)\right) \\
			& \leq \tau^{2 m-2} \sum_{j=1}^n\left(\frac{\tau^3}{2}\right)^j \alpha\left(\frac{x}{2^{j+m}}, \frac{x}{2^{j+m}}, \frac{x}{2^{j+m}}\right)\\
			&\leq \frac{2^m}{\tau^{m+2}} \sum_{l=m+1}^{n+m} \left(\frac{\tau^3}{2}\right)^l \alpha\left(\frac{x}{2^l}, \frac{x}{2^l}, \frac{x}{2^l}\right)
		\end{align*}
		$\to 0 \text{ as } m \to \infty.$\\
		(because $ \dfrac{2}{\tau} \leq 1$).\\ 
		Hence, the sequence $\left\{4^n \phi\left(\frac{x}{2^n}\right)\right\}$ is a $\rho$-Cauchy  sequence in $Y_\rho$ wich is $\rho$-complete. 
		Thus we define a mapping $h: X \rightarrow  Y_\rho $ as
		$$
		h(x)=\rho-\operatorname{limit}_{n \rightarrow \infty} 4^n \phi\left(\frac{x}{2^n}\right) \text {, for all } x \in X . 
		$$
		Now, we prove the estimation \eqref{Eq-15}. 
		We have
		\begin{align*}
			\rho(\phi(x)-h(x)) & \leq \frac{1}{2} \rho\left(2 \phi(x)-2 \cdot 4^n \phi\left(\frac{x}{2^n}\right)\right)+\frac{1}{2} \rho\left(2 \cdot 4^n \phi\left(\frac{x}{2^n}\right)-2 h(x)\right) \\
			& \leq \frac{\tau}{2} \rho\left(\phi(x)-4^n \phi\left(\frac{x}{2^n}\right)\right)+\frac{\tau}{2} \rho\left(4^n \phi\left(\frac{x}{2^n}\right)-h(x)\right) \\
			& \leq \frac{1}{2 \tau} \sum_{j=1}^n\left(\frac{\tau^3}{2}\right)^j \alpha\left(\frac{x}{2^j}, \frac{x}{2^j}, \frac{x}{2^j}\right)+\frac{\tau}{2} \rho\left(4^n \phi\left(\frac{x}{2^n}\right)-h(x)\right)
		\end{align*}
		for all $x \in X$.
		
		Since 
		$$\lim _{n \rightarrow \infty} \rho\left(4^n \phi\left(\frac{x}{2^n}\right)-h(x)\right)=0\text{ for all }x \in X,$$
		We get when $n \rightarrow \infty$
		$$
		\rho(\phi(x)-h(x)) \leq \frac{1}{2 \tau} \varphi(x, x, x) \text { for all } x \in X .
		$$
		On the other hand, we have:
		\begin{multline*}
			\rho\left(4^n \phi\left(\frac{1}{2^n}(x+y-z)\right)+4^n \phi\left(\frac{x+z-y}{2^n}\right)+4^n \phi\left(\frac{y+z-x}{2^n}\right)\right. \\
			\left.
			-4^n \phi\left(\frac{x-y}{2^n}\right)-4^n \phi\left(\frac{x-z}{2^n}\right)-4^n \phi\left(\frac{y- z}{2^n}\right)\right. \\
			\left.-4^n \phi\left(\frac{x}{2^n}\right)-4^n \phi\left(\frac{y}{2^n}\right)-4^n \phi\left(\frac{z}{2^n}\right)\right) \leq \tau^{2 n} \alpha\left(\frac{x}{2^n}, \frac{y}{2^n}, \frac{z}{2^n}\right) \rightarrow 0 \text { as } n \rightarrow \infty .
		\end{multline*}
		and we have
		\begin{multline*}
			\rho\left(\frac{1}{12} h(x+y-z)+\frac{1}{12} h(x+z-y)+\frac{1}{12} h(y+z-x)\right.\\
			\left.-\frac{1}{12} h(x-y)-\frac{1}{12} h(x-z)-\frac{1}{12} h(z-y) 
			-\frac{1}{12} h(x)-\frac{1}{12} h(y)-\frac{1}{12} h(z)\right) \\
			\leq \frac{1}{12} \rho\left(h(x+y-z)-4^n \phi\left(\frac{x+y-z}{2^n}\right)\right)+\frac{1}{12} \rho\left(h(x+z-y)-4^n \phi\left(\frac{x+z-y}{2^n}\right)\right) \\
			+\frac{1}{12} \rho\left(h(y+z-x)-4^n \phi\left(\frac{y+z-x}{2^n}\right)\right)+\frac{1}{12} \rho\left(h(x-y)-4^n \phi\left(\frac{x-y}{2^n}\right)\right) \\
			+\frac{1}{12} \rho\left(h(x-z)-4^n \phi\left(\frac{x-z}{2^n}\right)\right)+\frac{1}{12} \rho\left(h(z-y)-4^n \phi\left(\frac{z-y}{2^n}\right)\right)
		\end{multline*}

		\begin{multline*}			+\frac{1}{12} \rho\left(h(x)-4^n \phi\left(\frac{x}{2^n}\right)\right)+\frac{1}{12} \rho\left(h(y)-4^n \phi\left(\frac{y}{2^n}\right)\right)+\frac{1}{12} \rho\left(h(z)-4^n \phi\left(\frac{z}{2^n}\right)\right) \\			
		+\frac{1}{12} \rho \left(4^n \phi\left(\frac{x+y-z}{2^n}\right)+4^n \phi\left(\frac{x+z-y}{2^n}\right)+ 4^n\phi\left(\frac{y+z-x}{2^n}\right)\right. \\
			\left.
			+ 4^n\phi\left(\frac{x-y}{2^n}\right)+ 4^n\phi\left(\frac{x-z}{2^n}\right)
			+4^n \phi\left(\frac{z-y}{2^n}\right)
			-4^n\phi\left( \dfrac{x}{2^n}\right)
			-4^n\phi\left( \dfrac{y}{2^n}\right)
			-4^n\phi\left( \dfrac{z}{2^n}\right) \right)\\
			\rightarrow 0 \text { as } n \rightarrow \infty
		\end{multline*}
		Hence 
		$$h(x+y-z)+h(x+z-y)+h(y+z-x)-h(x-y)-h(x-z)-h(z-y)-h(x)-h(y)- h(z)=0,$$
		for all $x, y, z \in X$,
		and by Lemma \ref{Lemma2.1}, we deduce that $h$ is quadratic.
		
		Finally, for proving the uniqueness of $h$, we suppose that there exists another mapping $Q: X \rightarrow  Y_\rho $ (quadratic) satisfying \eqref{Eq-15}, we have:
		\begin{align*}
			\rho(h(x)-Q(x)) & \leq \frac{1}{2} \rho\left(2 \cdot 4^n h\left(\frac{x}{2^n}\right) - 2 \cdot 4^n \phi\left(\frac{x}{2^n}\right)\right)+\frac{1}{2} \rho\left(2 \cdot 4^n \phi\left(\frac{x}{2^n}\right)-2 \cdot 4^n Q\left(\frac{x}{2^n}\right)\right) \\
			& \leq \frac{\tau^{2 n+1}}{2} \rho \left(h\left(\frac{x}{2^n}\right) - \phi\left(\frac{x}{2^n}\right)\right) + \frac{\tau^{2 n+1}}{2} \rho\left(\phi\left(\frac{x}{2^n}\right)-Q\left(\frac{x}{2^n}\right)\right)\\
			& \leq \frac{\tau^{2 n}}{2} \varphi\left(\frac{x}{2^n}, \frac{x}{2^n}, \frac{x}{2^n}\right) \\
			& =\frac{2^{n-1}}{\tau^n} \sum_{l=n+1}^{\infty}\left(\frac{\tau^3}{2}\right)^l \alpha \left(\frac{x}{2^l}, \frac{x}{2^l}, \frac{x}{2^l}\right) 
			\rightarrow 0 \text { as } n \rightarrow \infty.
		\end{align*}
		Then $h(x)=Q(x)$ for all $x \in X$ and this complete the proof.
	\end{proof}		
		\begin{corollary}
		Let $X$ be a vector space, $ Y_\rho $ be a $\rho$-complete convex modular space satisfying $\Delta_2$-condition. 
		Let $r> \log _2\left(\frac{\tau^3}{2}\right)$ and $\theta>0$, be real numbers, and $\phi: X \rightarrow Y_\rho$ be
		a mapping such that
		\begin{multline*}
			\rho(\phi(x+z-z)+\phi(x+z-y)+\phi(y+z-x)-\phi(x-y)-\phi(x-z)-\phi(z-y)\\-\phi(x)-\phi(y)-\phi(z))) 
			\leq \theta\left(\|z\|^r+\|y\|^r+\|z\|^r\right)
		\end{multline*}
		for all $x, y, z \in X$. Then there exists a unique quadratic mapping $h: X \rightarrow  Y_\rho $ such that:
		$$
		\rho(\phi(x)-h(x)) \leq \frac{3 \theta \tau^2}{2\left(2^{r+1}-\tau^3\right)}\|x\|^r ; \qquad x \in X.
		$$
\end{corollary}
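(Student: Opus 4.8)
The plan is to derive this corollary directly from Theorem \ref{Theorem3.1} by specializing the control function to $\alpha(x,y,z) = \theta(\|x\|^r + \|y\|^r + \|z\|^r)$. With this choice $\alpha(0,0,0)=0$ is immediate, and the right-hand side of \eqref{Eq-14} becomes exactly the hypothesis of the corollary, so the only real work is to verify that condition (i) of the theorem holds for this $\alpha$ and then to evaluate $\varphi(x,x,x)$ in closed form.

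First I would verify \eqref{Eq-12}. The homogeneity $\alpha(x/2^j, y/2^j, z/2^j) = 2^{-jr}\alpha(x,y,z)$ turns the defining series into a geometric series with ratio $\tau^3/2^{r+1}$, namely
$$\varphi(x,y,z) = \theta(\|x\|^r + \|y\|^r + \|z\|^r)\sum_{j=1}^{\infty} \left(\frac{\tau^3}{2^{r+1}}\right)^j.$$
This converges precisely when $\tau^3/2^{r+1} < 1$, i.e. $r > \log_2(\tau^3/2)$, which is exactly the standing assumption; summing gives $\varphi(x,y,z) = \frac{\tau^3}{2^{r+1}-\tau^3}\,\theta(\|x\|^r+\|y\|^r+\|z\|^r)$. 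Next I would check \eqref{Eq-13}: the same homogeneity yields $\tau^{2n}\alpha(x/2^n,y/2^n,z/2^n) = (\tau^2/2^r)^n\,\theta(\|x\|^r+\|y\|^r+\|z\|^r)$, which tends to $0$ as soon as $2^r > \tau^2$. Here is the one point requiring a short argument: since $\rho$ is a convex modular, the Remark forces $\tau \geq 2$, hence $\log_2(\tau^3/2) = 3\log_2\tau - 1 \geq 2\log_2\tau$, so the hypothesis $r > \log_2(\tau^3/2)$ already guarantees $r > 2\log_2\tau$, i.e. $2^r > \tau^2$. Thus both parts of (i) are satisfied.

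With the hypotheses in place, Theorem \ref{Theorem3.1} supplies a unique quadratic $h$ satisfying $\rho(\phi(x)-h(x)) \leq \frac{1}{2\tau}\varphi(x,x,x)$. It then remains only to insert $\varphi(x,x,x) = \frac{3\theta\tau^3}{2^{r+1}-\tau^3}\|x\|^r$ and simplify the constant, $\frac{1}{2\tau}\cdot\frac{3\theta\tau^3}{2^{r+1}-\tau^3} = \frac{3\theta\tau^2}{2(2^{r+1}-\tau^3)}$, which is exactly the claimed bound. I do not anticipate any genuine obstacle, as the whole argument is a specialization of the preceding theorem; the only subtlety worth flagging is the recognition that the single hypothesis $r > \log_2(\tau^3/2)$ simultaneously secures the convergence in \eqref{Eq-12} and, by way of the structural inequality $\tau \geq 2$, the decay required in \eqref{Eq-13}.
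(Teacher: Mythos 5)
Your proposal is correct and follows exactly the route the paper intends: the corollary is stated without proof as a direct specialization of Theorem \ref{Theorem3.1} with $\alpha(x,y,z)=\theta(\|x\|^r+\|y\|^r+\|z\|^r)$, and your geometric-series evaluation of $\varphi(x,x,x)$ and the constant $\frac{1}{2\tau}\cdot\frac{3\theta\tau^3}{2^{r+1}-\tau^3}=\frac{3\theta\tau^2}{2(2^{r+1}-\tau^3)}$ reproduce the stated bound. Your observation that the decay condition \eqref{Eq-13} follows from $r>\log_2(\tau^3/2)$ together with the structural fact $\tau\geq 2$ for convex modulars is a point the paper leaves entirely implicit, and it is handled correctly.
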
		

\section{Stability of \eqref{Eq-1} in $\beta$-homogeneous Banach space}
\begin{definition}
	Let $X$ be a linear space over $\mathbb{C}$. The application $\|\cdot\|: X \rightarrow[0, \infty)$ is an $F$-norm if 
	\begin{enumerate}
		\item $\|x\|=0$ if and only if $u=0$.
		\item $\|\alpha u\|=\|u\|$ for avery $u \in X$ and every $\alpha$ with $|\alpha|=1$.
		\item $\|u+v\| \leq \|u\|+\|v\|$ for every $u,v \in X$.
		\item $\left\|\alpha_n u\right\| \rightarrow 0$ implies $\alpha_n \rightarrow 0$.
		\item  $\left\|\alpha u_n\right\| \rightarrow 0$ implies $u_n \rightarrow 0$.
	\end{enumerate}
	Letting $d(u, v)=\|u-v\|$. So $(X,d)$ is a metric space which called $F$-space if $d$ is complete.\\
	If $\|\alpha u\|= |\alpha|^\beta\|u\|$ for all $u \in X$ and $\alpha \in \mathbb{C}$, then $\| \cdot\| $ is called $\beta$-homogeneous $(\beta>0)$.\\
	A $\beta$-homogeneous $F$-space is called a $\beta$-homogenous Banach space.
\end{definition}
\begin{remark}
	If $\rho$ is a convex modular, then
	$$
	\|u\|_\rho=\inf \left\{\lambda^k>0 / \rho\left(\frac{u}{\lambda}\right) \leq  1\right\}, u \in Y_\rho 
	$$
	is an $F$-norm on $Y_\rho $ that satisfies $\|\alpha u\|_\rho=|\alpha|^{k}\|u\|_\rho$.
	Hence, $\|\cdot\|_\rho$ is $k$-homogeneous. 
	In the case $k=1$, this norm is called the Luxembourg norm.
\end{remark}
	
	\begin{theorem}
		Let $X$ be a vector space, $Y$ be a $\beta$-homogeneous complex Banach spaces and $\alpha: X^3 \rightarrow[0, \infty)$ be a function such that:
		\begin{equation}\label{Eq-4.1}
			\varphi(x, y, z)=\sum_{j=1}^{\infty} \frac{1}{4^{\beta j}} \alpha\left(2^{j-1} x, 2^{j-1} y, 2^{j-1} z\right)<\infty .
		\end{equation}
		for all $x, y, z \in X.$
		Let $\phi: X \rightarrow Y$ be a mapping satisfying $\phi(0)=0$ and
		\begin{multline}\label{Eq-12}
			\| \phi(x+y-z)+\phi(x+z-y)+\phi(y+z-x)-\phi(x-y)-\phi(x-z)-\phi(z-y) \\ -\phi(x)-\phi(y)-\phi(z))  \|
			\leq \alpha(x, y, z)
		\end{multline}
		for all $x y, y \in X$. Then there exists a unique quadratic mapping $h: X \rightarrow Y$ such that:
		\begin{equation}\label{Eq-13}
			\|\phi(x)-h(x)\| \leq \varphi(x, x, 0) \quad, x \in X .
		\end{equation}
	\end{theorem}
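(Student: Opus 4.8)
The plan is to mirror the direct-method argument of Theorem \ref{Theorem2.1}, replacing the modular $\rho$ by the $\beta$-homogeneous norm $\|\cdot\|$ and using the identity $\|\lambda u\|=|\lambda|^{\beta}\|u\|$ in place of the convexity estimates. First I would set $x=y$ and $z=0$ in \eqref{Eq-12} to obtain $\|\phi(2x)-4\phi(x)\|\leq\alpha(x,x,0)$, and then apply $\beta$-homogeneity with $\lambda=\tfrac14$ to get
\[
\left\|\tfrac{1}{4}\phi(2x)-\phi(x)\right\|=\frac{1}{4^{\beta}}\|\phi(2x)-4\phi(x)\|\leq\frac{1}{4^{\beta}}\alpha(x,x,0).
\]
An induction on $k$, identical in shape to the one establishing \eqref{Eq-6}, then yields
\[
\left\|\frac{1}{4^{k}}\phi(2^{k}x)-\phi(x)\right\|\leq\sum_{j=1}^{k}\frac{1}{4^{\beta j}}\alpha(2^{j-1}x,2^{j-1}x,0),
\]
where the inductive step splits $\tfrac{1}{4^{k+1}}\phi(2^{k+1}x)-\phi(x)$ through the intermediate term $\tfrac14\phi(2x)$ and combines the ordinary triangle inequality with homogeneity.

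Next I would show that $\{\phi(2^{n}x)/4^{n}\}$ is Cauchy. For $n>m$ the same telescoping as in \eqref{Eq-7} gives
\[
\left\|\frac{\phi(2^{n}x)}{4^{n}}-\frac{\phi(2^{m}x)}{4^{m}}\right\|\leq\sum_{k=m+1}^{n}\frac{1}{4^{\beta k}}\alpha(2^{k-1}x,2^{k-1}x,0),
\]
whose right-hand side is a tail of the convergent series \eqref{Eq-4.1} and hence tends to $0$. Since $Y$ is complete, the limit $h(x)=\lim_{n\to\infty}\phi(2^{n}x)/4^{n}$ exists, and letting $k\to\infty$ in the induction bound produces the estimate \eqref{Eq-13}. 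To see that $h$ satisfies \eqref{Eq-1}, I would divide the defect of \eqref{Eq-12} evaluated at $(2^{j}x,2^{j}y,2^{j}z)$ by $4^{j}$; homogeneity converts the right-hand side into $\tfrac{1}{4^{\beta j}}\alpha(2^{j}x,2^{j}y,2^{j}z)$, which tends to $0$ since it is (up to reindexing) the general term of the convergent series \eqref{Eq-4.1}. Passing to the limit and invoking Lemma \ref{Lemma2.1} shows $h$ is quadratic.

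For uniqueness, suppose $h_1$ and $h_2$ both obey \eqref{Eq-13}. Each is quadratic, so $h_i(2^{k}x)=4^{k}h_i(x)$, and therefore
\[
\|h_1(x)-h_2(x)\|=\frac{1}{4^{\beta k}}\|h_1(2^{k}x)-h_2(2^{k}x)\|\leq\frac{1}{4^{\beta k}}\bigl(\|h_1(2^{k}x)-\phi(2^{k}x)\|+\|\phi(2^{k}x)-h_2(2^{k}x)\|\bigr).
\]
Bounding each term by $\varphi(2^{k}x,2^{k}x,0)/4^{\beta k}$ exhibits the right-hand side as a tail of \eqref{Eq-4.1}, which vanishes as $k\to\infty$; hence $h_1=h_2$.

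The argument is essentially routine once the homogeneity bookkeeping is fixed, and I do not anticipate a genuine obstacle. The one point requiring care is that the scaling factor is $4^{-\beta}$ rather than $4^{-1}$: every place where the modular proof invoked convexity to pull out a $\tfrac14$ must here use $\|\lambda u\|=|\lambda|^{\beta}\|u\|$, so that the summands carry the exponent $4^{\beta j}$ exactly as recorded in the hypothesis \eqref{Eq-4.1}. Making these exponents line up with the defining series for $\varphi$ is the only step where a slip could occur.
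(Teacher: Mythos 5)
Your proposal is correct and follows essentially the same route as the paper's own proof: the same substitution $x=y$, $z=0$, the same induction and telescoping Cauchy estimate carrying the $4^{\beta j}$ exponents, the same definition of $h$ as the limit of $\phi(2^{n}x)/4^{n}$ with the error bound obtained by letting the induction index tend to infinity, and the same scaling-plus-tail argument for both quadraticity (via Lemma \ref{Lemma2.1}) and uniqueness. No gaps to report.
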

	
	\begin{proof}
		Letting $x=y, z=0$ in \eqref{Eq-12}, we get:
		$$
		\|\phi(2 x)-4 \phi(x)\| \leq \alpha(x, x, 0)
		$$
		Hence 
		\begin{equation}\label{Eq-14-}
			\left\|\frac{1}{4} \phi(2 x)-\phi(x)\right\| \leq \frac{1}{4^\beta} \alpha(x, x, 0)
		\end{equation}
		Then by simple induction, we have
		\begin{equation}\label{Eq-15-}
			\left\|\frac{1}{4^k} \phi\left(2^k x\right)-\phi(x)\right\| \leq \sum_{j=1}^k \frac{1}{4^{\beta j}} \alpha\left(2^{j-1} x, 2^{j-1} x, 0\right)
		\end{equation}
		for all $x \in X$ and all positive integer $k$.\\
		For $k=1$, we obtain \eqref{Eq-14-}. 
		Suppose that \eqref{Eq-15-} holds for $k \in\mathbb{N}$. 
		We have.
		\begin{align*}
			\left\|\frac{1}{4^{k+1}} \phi\left(2^{k+1} x\right)-\phi(x)\right\| & =\frac{1}{4^{\beta}}\left\| \frac{\phi\left(2^k 2 x\right)}{4^k}-\phi(2 x) +  \phi(2 x)-4 \phi(x) \right\|\\
			& \leq \frac{1}{4^\beta} \sum_{j=1}^k \frac{1}{4^{\beta j}} \alpha\left(2^j x, 2^j x, 0\right)+\frac{1}{4^\beta} \alpha(x, x, 0) \\
			& =\sum_{j=1}^{k+1} \frac{1}{4^{\beta  j}} \alpha\left(2^{j-1} x, 2^{j-1} x, 0\right)
		\end{align*}
		Hence \eqref{Eq-15-} holds for every $k \in \mathbb{N}$.
		
		Let $n$ and $l$ be positive integers with $m>l$, by \eqref{Eq-15-},
		we obtain:
		\begin{align}
			\left\|  \frac{\phi\left(2^n x\right)}{4^n}  -\frac{\phi\left(2^l x\right)}{4^l}\right\|  = & \left\|  \frac{1}{4^l}\left(\frac{\phi\left(2^{n-l} \cdot 2^l x\right)}{4^{n-l}}-\phi\left(2^l x\right) \right)\right\|  \nonumber\\
			& \leq \frac{1}{4^{\beta l}} \sum_{j=1}^{n-l} \frac{1}{4^{\beta j}} \alpha\left(2^{j+l-1} x, 2^{j+l-1} x, 0\right) \nonumber\\
			& =\sum_{m=l+1}^n \frac{1}{4^{\beta m}} \alpha\left(2^{m-1} x, 2^{m-1} x, 0\right) \label{Eq-16}
		\end{align}
	\end{proof}
	It follows  from \eqref{Eq-16} and \eqref{Eq-4.1} that $\left\{\dfrac{\phi(2^n x)}{4^n}\right\}$ is a Cauchy sequence in $Y$ wich is Banach space, then there exists a mapping $h: X \rightarrow Y$ such that: 
	$$h(x)=\lim _{n \rightarrow \infty} \frac{\phi(2^n x)}{4^n} ;\qquad x \in X.$$
	Now, we take $l=0$ 
	and tender $n$ to $\infty$, we obtain \eqref{Eq-13}. 
	
	In order to verify that $h$ is quadratique, we write
	\begin{multline*}
		 \|h(x+y-z)+h(x+z-y)+h(y+z-x)-h(x-y)-h(x-z)-h(z-y)-h(x)-h(y)-h(z)\| \\
		 \leq \left\|h(x+y-z)-\frac{\phi\left(2^j(x+y- z)\right)}{4^j}\right\|+\left\|h(x+z-y)-\frac{\phi\left(2^j(x+z-y)\right)}{4^j}\right\|\\
		 +\left\|h(y+z-x)-\frac{\phi\left(2^j(y+z-x )\right)}{4^j}\right\| \\
		 +\left\|h(x-y)-\frac{\phi\left(2^j(x-y)\right)}{4^j}\right\|+\left\|h(x-z)-\frac{\phi\left(2^j(x-z)\right)}{4^j}\right\|+\left\|h(z-y)-\frac{\phi\left(2^j(z-y)\right)}{4^j}\right\| \\
		 +\left\|h(x)-\frac{\phi\left(2^j x\right)}{4^j}\right\|+\left\|h(y)-\frac{\phi(2^j y)}{4^j}\right\|+\left\|h(z)-\frac{\phi\left(2^j z\right)}{4 y^j}\right\| \\
		 +\| \frac{\phi\left(2^{j}(x+y-z)\right)}{4^j}+\frac{\phi\left(2^j(x+z-y)\right)}{4^j}+\frac{\phi\left(2^j(y+z-x)\right)}{4^y}-\frac{\phi\left(2^j(x-y)\right)}{4^j}-\frac{\phi\left(2^j(x-z)\right)}{4^j}\\
		 -\frac{\phi\left(2^j(z-y)\right)}{4^j}-\frac{\phi\left(2^{j} x\right)}{4^j}-\frac{\phi\left(2^j y\right)}{4^j}-\frac{\phi\left(2^j z\right)}{4 ^j} \| 
		 \rightarrow 0 \text { as } j \rightarrow \infty.
	\end{multline*}
	Then we get:
	$$
	h(x+y-z)+h(x+z-y)+h(y+z-x)-h(x-y)-h(x-z)-h(z-y)-h(x)-h(y)-h(z)=0
	$$
	for all $x, y, z \in X$. 
	By Lemma \ref{Lemma2.1}, we conclude that $h$ is quadratic.
	
	Finally, letting $Q$ another mapping sartisfying \eqref{Eq-13}. 
	We have
	\begin{align*}
		\|h(x)-Q(x)\| &\leq  \left\|\frac{h\left(2^k x\right)-\phi\left(2^k x\right)}{4^k}\right\|+\left\|\frac{Q\left(2^k x\right)-\phi\left(2^k x\right)}{4^k}\right\| \\
		&\leq  \frac{2}{4^{k\beta}} \varphi\left(2^k x, 2^k x, 0\right)\\
		& =2 \sum_{j=1}^{\infty} \frac{1}{4^{\beta(j+k)}} \alpha\left(2^{k+j-1}, 2^{k+j-1} x, 0\right) \\
		& =2 \sum_{l=k+1}^{\infty} \frac{1}{4^{\beta l}} \alpha\left(2^{l-1} x, 2^{l-1} x, 0\right) 
		 \rightarrow 0 \text { as } k \rightarrow \infty.
	\end{align*}
	Then we have $h=Q$.
	Now, we obtain a classical result of Ulam stability of \eqref{Eq-1}, by putting $\alpha=\varepsilon>0$.
	
	\begin{corollary}
		Let $X$ be a vector space, $Y$ be a $\beta$-homogeneous complex Banach space with $0<\beta \leq 1$. Let $\phi: X \rightarrow Y$ be a mapping that satisfies  $\phi(0)=0$ and
		$$\|\phi(x+y-z)+\phi(x+z-y)+\phi(y+z-x)-\phi(x-y)-\phi(x-z)-\phi(z-y)-\phi(x)-\phi(y)-\phi(z)\| \leq \varepsilon$$ 
		for all $x, y, z \in X$. 
		Then there exists a unique quadratic mapping $h: X \rightarrow Y$ such that
		$$
		\|\phi(x)-h(x)\| \leq \frac{\varepsilon}{4^\beta-1} \quad ;\qquad  x \in X .
		$$	
	\end{corollary}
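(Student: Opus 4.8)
The plan is to deduce this statement as an immediate specialization of the preceding $\beta$-homogeneous stability theorem, applied to the constant control function $\alpha(x,y,z)=\varepsilon$ for all $x,y,z\in X$. First I would check that this choice of $\alpha$ satisfies the summability hypothesis \eqref{Eq-4.1}. Substituting the constant into the definition of $\varphi$ and pulling out the factor $\varepsilon$ reduces the defining series to a geometric one:
\[
\varphi(x,y,z)=\sum_{j=1}^{\infty}\frac{1}{4^{\beta j}}\,\varepsilon=\varepsilon\sum_{j=1}^{\infty}\left(\frac{1}{4^{\beta}}\right)^{j}.
\]
Since $\beta>0$ forces $4^{-\beta}<1$, this series converges and sums to $\dfrac{4^{-\beta}}{1-4^{-\beta}}=\dfrac{1}{4^{\beta}-1}$; hence $\varphi(x,y,z)=\dfrac{\varepsilon}{4^{\beta}-1}<\infty$ for all $x,y,z$, and in particular $\varphi(x,x,0)=\dfrac{\varepsilon}{4^{\beta}-1}$.

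Next I would observe that the hypothesis of the corollary is exactly the defining inequality \eqref{Eq-12} for this constant $\alpha$: the assumed bound by $\varepsilon$ is precisely the bound by $\alpha(x,y,z)$. Together with $\phi(0)=0$, every hypothesis of the theorem is met, so the theorem supplies a unique quadratic mapping $h:X\to Y$ with $\|\phi(x)-h(x)\|\le\varphi(x,x,0)$. Inserting the value of $\varphi(x,x,0)$ computed above yields the claimed estimate $\|\phi(x)-h(x)\|\le\dfrac{\varepsilon}{4^{\beta}-1}$ for all $x\in X$.

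There is no genuine obstacle here, as the statement is a routine corollary of the general theorem; the only computation is the evaluation of the geometric series. I would note that convergence requires only $\beta>0$, so the summability and the final bound do not actually use the extra restriction $0<\beta\le 1$ appearing in the hypothesis—that range is simply the natural setting for $\beta$-homogeneity. Uniqueness of $h$ is inherited directly from the theorem, so nothing further is needed.
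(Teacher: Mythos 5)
Your proposal is correct and follows exactly the route the paper intends: the corollary is obtained by applying the preceding theorem with the constant control $\alpha(x,y,z)=\varepsilon$, and the bound $\varphi(x,x,0)=\varepsilon\sum_{j=1}^{\infty}4^{-\beta j}=\dfrac{\varepsilon}{4^{\beta}-1}$ is precisely the geometric-series evaluation you carried out. Your side remark that only $\beta>0$ is needed for convergence is also accurate.
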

	
\end{document}